\newtheorem{lm}{Lemma}[section]
\newtheorem{prop}[lm]{Proposition}
\newtheorem{coro}[lm]{Corollary}
\newtheorem{teo}[lm]{Theorem}
\theoremstyle{definition}
\newtheorem{oss}[lm]{Remark}
\newtheorem*{ack}{Acknowledgments}
\title[An example]{A pathological example in\\ nonlinear spectral theory}
\author[Brasco]{Lorenzo Brasco}
\author[Franzina]{Giovanni Franzina}
\address[L.\ Brasco]{Dipartimento di Matematica e Informatica
	\newline\indent
	Universit\`a degli Studi di Ferrara
	\newline\indent
	Via Machiavelli 35, 44121 Ferrara, Italy}
\address{{\it and }
	Aix Marseille Univ, CNRS, Centrale Marseille, I2M, Marseille, France
	\newline\indent
	39 Rue Fr\'ed\'eric Joliot Curie, 13453 Marseille}
\email{lorenzo.brasco@unife.it}
\address[G.\ Franzina]{SISSA
\newline\indent
Via Bonomea 265, 34136 Trieste, Italy}
\email{gfranzina@sissa.it}
\keywords{Nonlinear eigenvalue problems, $p-$Laplacian, Lusternik-Schnirelmann theory}
\subjclass[2010]{35P30, 47A75, 58E05}
\numberwithin{equation}{section}
\begin{document}

\begin{abstract}
We construct an open set $\Omega\subset\mathbb{R}^N$ on which an eigenvalue problem for the $p-$Laplacian has not isolated first eigenvalue and the spectrum is not discrete. The same example shows that the usual Lusternik-Schnirelmann minimax construction does not exhaust the whole spectrum of this eigenvalue problem.
\end{abstract}

\maketitle

\begin{center}
\begin{minipage}{10cm}
\small
\tableofcontents
\end{minipage}
\end{center}

\section{Introduction}

\subsection{Framework}
For an open set $\Omega\subset\mathbb{R}^N$, we pick an exponent $1<p<\infty$ and consider the $p-$Laplace operator 
\[
\Delta_p u=\mathrm{div}(|\nabla u|^{p-2}\,\nabla u),
\]
acting on the homogeneous Sobolev space $\mathcal{D}^{1,p}_0(\Omega)$. The latter is defined as the completion of $C^\infty_0(\Omega)$ with respect to the norm
\[
u\mapsto \left(\int_\Omega |\nabla u|^p\,dx\right)^\frac{1}{p},\qquad \mbox{ for } u\in C^\infty_0(\Omega).
\]
The usual eigenvalue problem for the $p-$Laplace operator with homogeneous Dirichlet boundary condition is the following: find the numbers $\lambda\in\mathbb{R}$ such that the boundary value problem
\begin{equation}
\label{autovalore}
-\Delta_p u=\lambda\,|u|^{p-2}\,u,\ \mbox{ in } \Omega,\qquad u=0, \ \mbox{ on }\partial\Omega,
\end{equation}
admits a solution $u\in\mathcal{D}^{1,p}_0(\Omega)\setminus\{0\}$, see for example \cite{Lqv}.
\vskip.2cm\noindent
In this note we want to consider the following variant
\begin{equation}
\label{autosalone}
-\Delta_p u=\lambda\,\|u\|^{p-q}_{L^q(\Omega)}\, |u|^{q-2}\,u,\ \mbox{ in } \Omega,\qquad u=0, \ \mbox{ on }\partial\Omega,
\end{equation}
where $1<q<p$. This problem has already been studied by the second author and Lamberti in \cite{FL}. At a first glance, equation \eqref{autosalone} could seem
a bit
weird, due to the presence of the $L^q$ norm on the right-hand side. We observe that this term guarantees that both sides of the equation share the same homogeneity, exactly like in the standard case \eqref{autovalore}. 
\par
Though the introduction of this term containing the $L^q$ norm may looks artificial, nevertheless it is easily seen that \eqref{autosalone} is a natural extension of \eqref{autovalore}. Indeed, eigenvalues of the $p-$Laplacian can be seen as critical points of the functional $u\mapsto \int_\Omega |\nabla u|^p\,dx$ restricted to the manifold
\[
\mathcal{S}_p(\Omega)=\{u\in \mathcal{D}^{1,p}_0(\Omega)\, :\, \|u\|_{L^p(\Omega)}=1\}.
\]
In a similar fashion, eigenvalues of \eqref{autosalone} correspond to critical points of the same  functional, this time restricted to the manifold
\[
\mathcal{S}_{p,q}(\Omega)=\{u\in \mathcal{D}^{1,p}_0(\Omega)\, :\, \|u\|_{L^q(\Omega)}=1\}.
\]
We define the {\it $(p,q)-$spectrum of $\Omega$} as follows
\[
\mathrm{Spec}(\Omega;p,q)=\{\lambda\in \mathbb{R}\, :\, \mbox{equation \eqref{autosalone} admits a solution in } \mathcal{D}^{1,p}_0(\Omega)\setminus\{0\}\},
\]
and we
call every element of this set a {\it $(p,q)-$eigenvalue of $\Omega$}.
\par 
Let us assume that the open set $\Omega\subset\mathbb{R}^N$ is such that the embedding $\mathcal{D}^{1,p}_0(\Omega)\hookrightarrow L^q(\Omega)$ is compact.
It is known that $\mathrm{Spec}(\Omega;p,q)$ is a closed set, see \cite[Theorem 5.1]{FL}. It is not difficult to see that 
\[
\lambda\ge \lambda_{p,q}^1(\Omega)>0,\qquad \mbox{ for every }\lambda \in\mathrm{Spec}(\Omega;p,q),
\]
where $\lambda_{p,q}^1(\Omega)$ is the {\it first $(p,q)-$eigenvalue of $\Omega$}, defined by
\[
\lambda_{p,q}^1(\Omega)=\min_{u\in \mathcal{S}_{p,q}(\Omega)} \int_\Omega |\nabla u|^p\,dx. 
\] 
We recall that when $\Omega$ is connected, then $\lambda_{p,q}^1(\Omega)$ is {\it simple}, i.e. the corresponding solutions to \eqref{autosalone} forms a vector subspace of dimension $1$ (see \cite[Theorem 3.1]{FL}).
\par
Moreover, it is known that $\mathrm{Spec}(\Omega;p,q)$ contains an increasingly diverging sequence of eigenvalues $\{\lambda_{p,q}^k(\Omega)\}_{k\in\mathbb{N}\setminus\{0\}}$, defined through a variational procedure analogous to the so-called {\it Courant minimax principle} used for the spectrum of the Laplacian.
\par
Let us be more precise on this point. For every $k\in\mathbb{N}\setminus\{0\}$, we define
\[
\Sigma^k_{p,q}(\Omega)=\Big\{A\subset\mathcal{S}_{p,q}(\Omega)\, :\, A \mbox{ compact and symmetric, with } \gamma(A)\ge k \Big\},
\]
where $\gamma(\cdot)$ denotes the {\it Krasnosel'ski\u{\i} genus} of a closed set, defined by
\[
\gamma(A)=\inf\left\{k\in\mathbb{N}\, :\, \exists \mbox{ a continuous odd map } \phi:A\to\mathbb{S}^{k-1}\right\},
\]
with the convention that $\gamma(A)=+\infty$, if no such an integer $k$ exists. Then for every $k\in\mathbb{N}\setminus\{0\}$, one can define the number
\[
\lambda^k_{p,q}(\Omega)=\inf_{A\in\Sigma^k_{p,q}(\Omega)}\max_{u\in A} \int_\Omega |\nabla u|^p\, dx.
\]
By \cite[Theorem 5.2]{FL} we have
\[
\{\lambda^k_{p,q}(\Omega)\}_{k\in\mathbb{N}\setminus\{0\}}\subset \mathrm{Spec}(\Omega;p,q)\qquad \mbox{ and }\qquad \lim_{k\to\infty} \lambda^k_{p,q}(\Omega)=+\infty.
\]
We will use the notation
\[
\mathrm{Spec}_{LS}(\Omega;p,q):=\{\lambda^k_{p,q}(\Omega)\}_{k\in\mathbb{N}\setminus\{0\}},
\]
for the {\it Lusternik-Schnirelmann $(p,q)-$spectrum of $\Omega$}.
\par
We recall that when $p=q=2$ then the Lusternik-Schnirelmann spectrum coincides with the whole spectrum of the Dirichlet-Laplacian, see for example \cite[Theorem A.2]{BPS}. In all the other cases, it is not known whether $\mathrm{Spec}_{LS}(\Omega;p,q)$ and $\mathrm{Spec}(\Omega;p,q)$ coincide or not.

\subsection{The content of the paper} The humble aim of this small note is to shed some light on the relation between the two spectra. More precisely, in Theorem \ref{teo:palle} below
we construct an example of an open set $\mathcal{B}\subset\mathbb{R}^N$ such that for $1<q<p$
\begin{itemize}
\item the embedding $\mathcal{D}^{1,p}_0(\mathcal{B})\hookrightarrow L^q(\mathcal{B})$ is compact (the set $\mathcal{B}$ is indeed bounded);
\vskip.2cm
\item $\mathrm{Spec}_{LS}(\mathcal{B};p,q)\not =\mathrm{Spec}(\mathcal{B};p,q)$;
\vskip.2cm
\item $\mathrm{Spec}(\mathcal{B};p,q)$ has (at least) countably many accumulation points.
\end{itemize}
Actually, by using the same idea, in Theorem \ref{teo:grovierone} below we present an even worse example, i.e. an open set $\mathcal{T}\subset\mathbb{R}^N$ such that for $1<q<p$
\begin{itemize}
\item the embedding $\mathcal{D}^{1,p}_0(\mathcal{T})\hookrightarrow L^q(\mathcal{T})$ is compact;
\vskip.2cm
\item $\mathrm{Spec}_{LS}(\mathcal{T};p,q)\not =\mathrm{Spec}(\mathcal{T};p,q)$.
\vskip.2cm
\item $\mathrm{Spec}(\mathcal{T};p,q)$ has (at least) countably many accumulation points;
\vskip.2cm
\item the first eigenvalue $\lambda^1_{p,q}(\mathcal{T})$ is {\it not} isolated, i.e. there exists $\{\lambda_n\}_n\subset \mathrm{Spec}(\mathcal{T};p,q)$ such that 
\[
\lambda^1_{p,q}(\mathcal{T})=\lim_{n\to\infty} \lambda_n.
\]
\end{itemize}
Although we agree that our examples are quite pathological (in particular $\mathcal{T}$ could be bounded, but made of infinitely many connected components) and strongly based on the fact that $q/p<1$, we believe them to have their own interest in abstract Critical Point Theory.  
\begin{oss}[More general index theories]
For simplicity, in this paper we consider the Lusternik-Schnirelmann spectrum defined by means of the Krasnosel'ski\u{\i} genus. We recall that it is possible to define diverging sequences of eigenvalues in a similar fashion, by using another index in place of the genus. For example, one could use the {\it $\mathbb{Z}_2-$cohomological index}
\cite{fadell_rabinowitz1977} or the {\it Lusternik-Schnirelmann Category} \cite[Chapter 2]{St}. Our examples still apply in each of these cases, since they are independent of the choice of the index.
\end{oss}

\begin{ack}
We thank Peter Lindqvist for his kind interest in this work. This manuscript has been finalized while the first author was visiting the KTH (Stockholm) in February 2017. He wishes to thank Erik Lindgren for the kind invitation. The authors are members of the Gruppo Nazionale per l'Analisi Matematica, la Pro\-ba\-bi\-li\-t\`a
e le loro Applicazioni (GNAMPA) of the Istituto Nazionale di Alta Matematica (INdAM).
\end{ack}

\section{Spectrum of disconnected sets}
\subsection{General eigenvalues}

For the standard eigenvalue problem \eqref{autovalore}, i.e. when $q=p$, it is well-known that the spectrum of a disconnected open set $\Omega$ is made of the collection of the eigenvalues of its connected components. For $1<q<p$ this only gives a part of the spectrum, the general formula is contained in the following result.
\begin{prop}
\label{prop:generalone}
Let $1<q<p<\infty$ and  let $\Omega\subset\mathbb{R}^N$ be an open set such that $\mathcal{D}^{1,p}_0(\Omega)\hookrightarrow L^q(\Omega)$ is compact.  Let us suppose that
\[
\Omega=\Omega_1\cup \Omega_2,
\]
with $\Omega_i\subset \mathbb{R}^N$ open set, such that $\mathrm{dist}(\Omega_1,\Omega_2)>0$.
Then $\lambda$ is a $(p,q)-$eigenvalue of $\Omega$ if and only if it is of the form
\begin{equation}
\label{rap}
\lambda=\left[\displaystyle\left(\frac{\delta_1}{\lambda_1}\right)^\frac{q}{p-q}+\displaystyle\left(\frac{\delta_2}{\lambda_2}\right)^\frac{q}{p-q}\right]^\frac{q-p}{q}\qquad \mbox{ for some $(p,q)-$eigenvalue $\lambda_i$ of $\Omega_i$},
\end{equation}
where the coefficients $\delta_1$ and $\delta_2$ are such that
\[
\delta_i\in\{0,1\}\qquad \mbox{ and }\qquad \delta_1+\delta_2\not =0.
\]
Moreover, if we set 
\[
|\alpha_i|=\displaystyle\left(\frac{\lambda}{\lambda_i}\right)^\frac{1}{p-q},\qquad i=1,2,
\]
each $(p,q)-$eigenfunction $U$ of $\Omega$ corresponding to \eqref{rap} takes the form
\begin{equation}
\label{autofunzioni}
U=C\,\Big(\delta_1\,\alpha_1\,u_1+\delta_2\,\alpha_2\,u_2\Big),
\end{equation}
where $C\in\mathbb{R}$ and $u_i\in\mathcal{D}^{1,p}_0(\Omega_i)$ is a $(p,q)-$eigenfunction of $\Omega_i$ with unitary $L^q$ norm corresponding to $\lambda_i$, for $i=1,2$.
\end{prop}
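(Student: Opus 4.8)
The plan is to reduce the problem to the eigenvalue equations on $\Omega_1$ and $\Omega_2$ taken separately: the key structural fact is that a positive distance between the two pieces forces the homogeneous Sobolev space to split as a direct sum.

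First I would record the decomposition
\[
\mathcal{D}^{1,p}_0(\Omega)=\mathcal{D}^{1,p}_0(\Omega_1)\oplus\mathcal{D}^{1,p}_0(\Omega_2),
\]
meaning that every $u\in\mathcal{D}^{1,p}_0(\Omega)$ is written uniquely as $u=u_1+u_2$ with $u_i\in\mathcal{D}^{1,p}_0(\Omega_i)$ (extended by zero outside $\Omega_i$), together with the additivity relations
\[
\int_\Omega|\nabla u|^p\,dx=\int_{\Omega_1}|\nabla u_1|^p\,dx+\int_{\Omega_2}|\nabla u_2|^p\,dx,\qquad \|u\|^q_{L^q(\Omega)}=\|u_1\|^q_{L^q(\Omega_1)}+\|u_2\|^q_{L^q(\Omega_2)}.
\]
For $\varphi\in C^\infty_0(\Omega)$ this is immediate: since $\mathrm{dist}(\Omega_1,\Omega_2)>0$, the compact set $\mathrm{supp}\,\varphi$ splits into two compact pieces lying in $\Omega_1$ and $\Omega_2$ respectively, so $\varphi\,\chi_{\Omega_i}\in C^\infty_0(\Omega_i)$ and $\varphi\mapsto(\varphi\,\chi_{\Omega_1},\varphi\,\chi_{\Omega_2})$ is an isometry by the first identity; one then passes to completions. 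The splitting of the $L^q$ norm is mere additivity of the integral over the disjoint sets $\Omega_1,\Omega_2$, and compactness of $\mathcal{D}^{1,p}_0(\Omega_i)\hookrightarrow L^q(\Omega_i)$ is inherited from that on $\Omega$ (alternatively, one can quote this from \cite{FL}).

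Next I would unwind the weak formulation of \eqref{autosalone}. Testing the equation on $\Omega$ against an arbitrary $\varphi_i\in\mathcal{D}^{1,p}_0(\Omega_i)$, and using $\nabla u=\nabla u_i$, $u=u_i$ on $\Omega_i$, one gets that $u=u_1+u_2\in\mathcal{D}^{1,p}_0(\Omega)\setminus\{0\}$ solves \eqref{autosalone} with parameter $\lambda$ if and only if, for every index $i$ with $u_i\not\equiv0$,
\[
-\Delta_p u_i=\lambda\,\|u\|^{p-q}_{L^q(\Omega)}\,|u_i|^{q-2}\,u_i\qquad\mbox{ in }\Omega_i.
\]
Writing $u_i=\alpha_i\,v_i$ with $\alpha_i=\|u_i\|_{L^q(\Omega_i)}>0$ and $\|v_i\|_{L^q(\Omega_i)}=1$, and exploiting that $-\Delta_p$ is $(p-1)$-homogeneous, this rewrites as $-\Delta_p v_i=\lambda\,\big(\|u\|_{L^q(\Omega)}/\alpha_i\big)^{p-q}\,|v_i|^{q-2}\,v_i$ in $\Omega_i$, so that $v_i$ is a $(p,q)$-eigenfunction of $\Omega_i$ with unitary $L^q$ norm, associated with $\lambda_i:=\lambda\,(\|u\|_{L^q(\Omega)}/\alpha_i)^{p-q}$; equivalently $\alpha_i^{p-q}=\lambda\,\|u\|^{p-q}_{L^q(\Omega)}/\lambda_i$. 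Normalizing $\|u\|_{L^q(\Omega)}=1$ (legitimate by the $0$-homogeneity of the problem) gives $\alpha_i=(\lambda/\lambda_i)^{1/(p-q)}$, and plugging this into $\|u\|^q_{L^q(\Omega)}=\delta_1\,\alpha_1^q+\delta_2\,\alpha_2^q=1$ — where $\delta_i:=1$ exactly when $u_i\not\equiv0$ (so the term is $0$ when $\delta_i=0$) — yields the constraint $\delta_1\,(\lambda/\lambda_1)^{q/(p-q)}+\delta_2\,(\lambda/\lambda_2)^{q/(p-q)}=1$, which is precisely \eqref{rap}. Running the same chain of equalities backwards proves the converse and the form of the eigenfunctions: given $(p,q)$-eigenvalues $\lambda_i$ of $\Omega_i$, normalized eigenfunctions $u_i$, and admissible $\delta_i$, define $\lambda$ by \eqref{rap} and $|\alpha_i|$ as in the statement; then a direct check shows $\|U\|_{L^q(\Omega)}=1$ and that $U$ in \eqref{autofunzioni} lies in $\mathcal{D}^{1,p}_0(\Omega)\setminus\{0\}$ and solves \eqref{autosalone} with parameter $\lambda$.

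The only genuinely delicate step is the direct-sum decomposition of $\mathcal{D}^{1,p}_0(\Omega)$ with the attendant additivity of the $p$-Dirichlet energy; once this is in hand, everything reduces to the elementary algebra above carried out in both directions. A minor point is that $\alpha_i$ is determined only up to a sign (since $-v_i$ is an eigenfunction whenever $v_i$ is), which is harmless and is exactly why the statement displays $|\alpha_i|$ and keeps the free constant $C$ in \eqref{autofunzioni}.
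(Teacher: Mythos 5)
Your argument is correct and follows essentially the same route as the paper: restrict the eigenfunction to each component, rescale by its $L^q$ norm to identify $\lambda_i=\lambda\,\alpha_i^{q-p}$ as an eigenvalue of $\Omega_i$, use $\alpha_1^q+\alpha_2^q=1$ to get \eqref{rap} and \eqref{autofunzioni}, and reverse the computation for the converse. The only difference is that you make explicit the splitting $\mathcal{D}^{1,p}_0(\Omega)=\mathcal{D}^{1,p}_0(\Omega_1)\oplus\mathcal{D}^{1,p}_0(\Omega_2)$ coming from $\mathrm{dist}(\Omega_1,\Omega_2)>0$, a step the paper uses implicitly when writing $U\cdot 1_{\Omega_i}\in\mathcal{D}^{1,p}_0(\Omega_i)$; this is a harmless (indeed welcome) addition, not a different proof.
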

\begin{proof}
Let us suppose that $\lambda$ is an eigenvalue and let $U\in \mathcal{D}^{1,p}_0(\Omega)$ be a corresponding eigenfunction. For simplicity, we take $U$ with unitary $L^q$ norm. Let us set
\[
u_i=U\cdot 1_{\Omega_i}\in \mathcal{D}^{1,p}_0(\Omega_i),\qquad i=1,2,
\]
then these two functions are weak solutions of
\[
-\Delta_p u_i=\lambda\, |u_i|^{q-2}\, u_i,\quad \mbox{ in }\Omega_i,\ i=1,2.
\]
We have to distinguish two situations: either both $u_1$ and $u_2$ are not identically zero; or at least one of the two identically vanishes.
\par
In the first case, by setting $\alpha_i=\|u_i\|_{L^q(\Omega_i)}$, for $i=1,2$, we can rewrite the previous equation as
\[
-\Delta_p u_i=\frac{\lambda}{\alpha_i^{p-q}}\,\|u_i\|^{p-q}_{L^q(\Omega_i)}\, |u_i|^{q-2}\, u_i,\quad \mbox{ in }\Omega_i,\ i=1,2,
\]
which implies that $\lambda_i:=\lambda\, \alpha_i^{q-p}$ is an eigenvalue of $\Omega_i$, $i=1,2$. By using that $\alpha_1^q+\alpha_2^q=1$, we can infer that
\[
1=\alpha_1^q+\alpha_2^q=\lambda^\frac{q}{p-q}\, \left[\displaystyle\left(\frac{1}{\lambda_1}\right)^\frac{q}{p-q}+\displaystyle\left(\frac{1}{\lambda_2}\right)^\frac{q}{p-q}\right],
\]
which implies that $\lambda$ has the form \eqref{rap}, with $\delta_1=\delta_2=1$. Moreover, since $\lambda\,\alpha_i^{q-p}=\lambda_i$, this gives that the eigenfunction $U$ has the form
\[
\begin{split}
U=u_1+u_2&=\alpha_1\,\frac{u_1}{\|u_1\|_{L^q(\Omega_1)}}+\alpha_2\,\frac{u_2}{\|u_2\|_{L^q(\Omega_2)}}\\
&=\left(\frac{\lambda}{\lambda_1}\right)^\frac{1}{p-q}\,\frac{u_1}{\|u_1\|_{L^q(\Omega_1)}}+\left(\frac{\lambda}{\lambda_2}\right)^\frac{1}{p-q}\,\frac{u_2}{\|u_2\|_{L^q(\Omega_2)}},
\end{split}
\]
which is formula \eqref{autofunzioni}.
\par
Let us now suppose that $u_2\equiv 0$, this implies that $U=u_1$ and $u_1$ has unitary $L^q$ norm. This automatically gives that $\lambda$ is an eigenvalue of $\Omega_1$, i.e. we have formula \eqref{rap} with $\delta_1=1$ and $\delta_2=0$.
\vskip.2cm\noindent
Conversely, let us now suppose that $\lambda_i$ is a $(p,q)-$eigenvalue of $\Omega_i$ with eigenfunction $u_i\in\mathcal{D}^{1,}_0(\Omega_i)$ normalized in $L^q$, for $i=1,2$. We are going to prove that formula \eqref{rap} gives a $(p,q)-$eigenvalue of $\Omega$.
\par
 We first observe that we immediately get that $\lambda_1$ and $\lambda_2$ are eigenvalues of $\Omega$, with eigenfunctions $u_1$ and $u_2$ extended by $0$ on the other component. This corresponds to \eqref{rap} with $\delta_2=0$ and $\delta_1=0$, respectively.
\par
Now we set
\[
U=\beta_1\, u_1+\beta_2\, u_2\in \mathcal{D}^{1,p}_0(\Omega), 
\] 
where $\beta_1,\beta_2\in\mathbb{R}\setminus\{0\}$ has to be suitably chosen. By using the equations solved by $u_1$ and $u_2$ and using that these have disjoint supports, we get that
\[
\begin{split}
-\Delta_p U=-|\beta_i|^{p-2}\,\beta_i\, \Delta_p u_i&=|\beta_i|^{p-2}\,\beta_i\, \lambda_i\, |u_i|^{q-2}\, u_i\\
&=|\beta_i|^{p-q}\, \lambda_i\,|U|^{q-2}\, U,\qquad \mbox{ in }\Omega_i,\quad i=1,2.
\end{split}
\]
The previous implies that if we want $U$ to be an eigenfunction of $\Omega$ with eigenvalue $\lambda$ given by formula \eqref{rap} with $\delta_1=\delta_2=1$, we need to choose $\beta_1,\beta_2$ in such a way that
\[
|\beta_1|^{p-q}\, \lambda_1=\lambda\, \|U\|^{p-q}_{L^q(\Omega)}=|\beta_2|^{p-q}\, \lambda_2.
\]
Since we have
\[
\|U\|^{p-q}_{L^q(\Omega)}=\left(|\beta_1|^q+|\beta_2|^q\right)^\frac{p-q}{q},
\]
this is equivalent to require that
\[
|\beta_1|^{p-q}\, \lambda_1=\left[\displaystyle\left(\frac{1}{\lambda_1}\right)^\frac{q}{p-q}+\displaystyle\left(\frac{1}{\lambda_2}\right)^\frac{q}{p-q}\right]^\frac{q-p}{q}\, \left(|\beta_1|^q+|\beta_2|^q\right)^\frac{p-q}{q},
\]
and
\[
|\beta_2|^{p-q}\, \lambda_2=\left[\displaystyle\left(\frac{1}{\lambda_1}\right)^\frac{q}{p-q}+\displaystyle\left(\frac{1}{\lambda_2}\right)^\frac{q}{p-q}\right]^\frac{q-p}{q}\, \left(|\beta_1|^q+|\beta_2|^q\right)^\frac{p-q}{q},
\]
that is
\[
|\beta_1|=\left(\frac{\lambda_2}{\lambda_1}\right)^\frac{1}{p-q}\,|\beta_2|.
\]
Thus we get that $U$ must be of the form \eqref{autofunzioni}, in the case $\delta_1=\delta_2=1$. Moreover, we obtain that formula \eqref{rap} with $\delta_1=\delta_2=1$ defines an eigenvalue of $\Omega$.
\end{proof}
We can iterate the previous result and get the following
\begin{coro}
\label{coro:crucial}
Let $1<q<p<\infty$ and let $\Omega\subset\mathbb{R}^N$ be an open set such that $\mathcal{D}^{1,p}_0(\Omega)\hookrightarrow L^q(\Omega)$ is compact.
Let us suppose that
\[
\Omega=\bigcup_{i=1}^\# \Omega_i,
\]
with $\Omega_i\subset \mathbb{R}^N$ open set, such that $\mathrm{dist}(\Omega_i,\Omega_j)>0$, for $i\not =j$. Then $\lambda$ is a $(p,q)-$eigenvalue of $\Omega$ if and only if it is of the form
\begin{equation}
\label{rap_gen}
\lambda=\left[\displaystyle\sum_{i=1}^\#\left(\frac{\delta_i}{\lambda_i}\right)^\frac{q}{p-q}\right]^\frac{q-p}{q}\qquad \mbox{ for some $(p,q)-$eigenvalue $\lambda_i$ of $\Omega_i$},
\end{equation}
where the coefficients $\delta_i$ are such that
\[
\delta_i\in\{0,1\}\qquad \mbox{ and }\qquad \sum_{i=1}^\#\delta_i\not =0.
\]
Moreover, if we set 
\[
|\alpha_i|=\displaystyle\left(\frac{\lambda}{\lambda_i}\right)^\frac{1}{p-q},
\]
each corresponding $(p,q)-$eigenfunction $U$ of $\Omega$ has the form
\[
U=C\,\left(\sum_{i=1}^\#\delta_i\,\alpha_i\,u_i\right),
\]
where $C\in\mathbb{R}$ and $u_i\in\mathcal{D}^{1,p}_0(\Omega)$ is $(p,q)-$eigenfunction of $\Omega_i$ with unitary $L^q$ norm corresponding to $\lambda_i$.
\end{coro}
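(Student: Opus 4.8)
The plan is to deduce the statement from Proposition~\ref{prop:generalone} by iteration when $\#<\infty$, and by a direct adaptation of its proof when $\#=\infty$; the two arguments are of course mutually consistent. For finite $\#$ I would induct on the number of pieces, the case $\#=2$ being exactly Proposition~\ref{prop:generalone}. For the inductive step write $\Omega=\Omega_1\cup\Omega'$ with $\Omega'=\bigcup_{i=2}^{\#}\Omega_i$; since there are finitely many pieces, $\mathrm{dist}(\Omega_1,\Omega')=\min_{2\le i\le\#}\mathrm{dist}(\Omega_1,\Omega_i)>0$, and $\mathcal{D}^{1,p}_0(\Omega')\hookrightarrow L^q(\Omega')$ is compact because extension by zero embeds $\mathcal{D}^{1,p}_0(\Omega')$ isometrically into $\mathcal{D}^{1,p}_0(\Omega)$. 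Applying Proposition~\ref{prop:generalone} to $\Omega=\Omega_1\cup\Omega'$ and the inductive hypothesis to $\Omega'$, it remains to see that the two formulas compose into \eqref{rap_gen}: if $\mu=\bigl[\sum_{i=2}^{\#}(\delta_i/\lambda_i)^\frac{q}{p-q}\bigr]^\frac{q-p}{q}$ then, raising $\mu$ to the power $\frac{q}{p-q}$ undoes the outer exponent, so $(\delta'/\mu)^\frac{q}{p-q}=\delta'\sum_{i=2}^{\#}(\delta_i/\lambda_i)^\frac{q}{p-q}$ (using $\delta'\in\{0,1\}$), whence \eqref{rap} for $\Omega_1\cup\Omega'$ becomes precisely \eqref{rap_gen}; the eigenfunction formula composes in the same way, since after substituting $|\alpha'|=(\lambda/\mu)^\frac{1}{p-q}$ and $|\tilde\alpha_i|=(\mu/\lambda_i)^\frac{1}{p-q}$ one gets $|\alpha'\,\tilde\alpha_i|=(\lambda/\lambda_i)^\frac{1}{p-q}=|\alpha_i|$, so $\alpha'\,u'=\pm C'\sum_{i\ge2}\delta_i\,\alpha_i\,u_i$. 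These are routine manipulations once the exponents are tracked.

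For $\#=\infty$ I would instead repeat the proof of Proposition~\ref{prop:generalone} almost verbatim, once the following decomposition is available: every $U\in\mathcal{D}^{1,p}_0(\Omega)$ satisfies $u_i:=U\cdot 1_{\Omega_i}\in\mathcal{D}^{1,p}_0(\Omega_i)$ for all $i$, with $U=\sum_i u_i$ in $\mathcal{D}^{1,p}_0(\Omega)$ and $\int_\Omega|\nabla U|^p\,dx=\sum_i\int_{\Omega_i}|\nabla u_i|^p\,dx$. Granting this, the ``only if'' part runs as before: for an eigenfunction $U$ with $\|U\|_{L^q(\Omega)}=1$, each non-vanishing $u_i$ solves $-\Delta_p u_i=\lambda\,|u_i|^{q-2}u_i$ in $\Omega_i$, so $\lambda_i:=\lambda\,\alpha_i^{q-p}$ with $\alpha_i=\|u_i\|_{L^q(\Omega_i)}$ is a $(p,q)$-eigenvalue of $\Omega_i$, and $1=\sum_i\alpha_i^q=\lambda^\frac{q}{p-q}\sum_i(\delta_i/\lambda_i)^\frac{q}{p-q}$ yields \eqref{rap_gen} (in particular the series converges) and the eigenfunction formula with $C=1$. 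For the ``if'' part, given $(p,q)$-eigenvalues $\lambda_i$ of $\Omega_i$, normalized eigenfunctions $u_i$, and $\delta_i\in\{0,1\}$ not all zero for which the right-hand side of \eqref{rap_gen} defines a finite positive number $\lambda$, set $U=\sum_i\delta_i\,\alpha_i\,u_i$ with $|\alpha_i|=(\lambda/\lambda_i)^\frac{1}{p-q}$; the partial sums lie in $\mathcal{D}^{1,p}_0(\Omega)$ and are Cauchy there (their tail energies tend to $0$), so $U\in\mathcal{D}^{1,p}_0(\Omega)$, and using disjointness of the supports one checks componentwise that $-\Delta_p U=|\alpha_i|^{p-q}\lambda_i\,|U|^{q-2}U=\lambda\,|U|^{q-2}U$ on each $\Omega_i$ with $\delta_i=1$ and $U=0$ on the others; together with $\|U\|_{L^q(\Omega)}^q=\sum_i\delta_i\,\alpha_i^q=1$ (and $\int_\Omega|\nabla U|^p\,dx=\sum_i\delta_i\,|\alpha_i|^p\lambda_i=\lambda$) this shows $U\in\mathcal{D}^{1,p}_0(\Omega)\setminus\{0\}$ and $\lambda\in\mathrm{Spec}(\Omega;p,q)$; the general eigenfunction for this $\lambda$ is $C\,U$ by the ``only if'' analysis.

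The one genuinely new point, and the place I expect the main (if modest) difficulty, is the decomposition lemma in the infinite case. Unlike in Proposition~\ref{prop:generalone} one cannot simply say that $1_{\Omega_i}$ is locally constant on $\Omega$, because the components $\Omega_j$, $j\ne i$, may accumulate, so $\mathrm{dist}(\Omega_i,\bigcup_{j\ne i}\Omega_j)$ can vanish. Instead I would argue by approximation: write $U$ as a $\mathcal{D}^{1,p}_0$-limit of $\phi_n\in C^\infty_0(\Omega)$; since the compact support of $\phi_n$ meets only finitely many of the pairwise disjoint open sets $\Omega_i$, and meets each in a compact subset of it, one has $\phi_n\cdot 1_{\Omega_i}\in C^\infty_0(\Omega_i)$ with $\|\nabla(\phi_n\cdot 1_{\Omega_i})\|_{L^p(\Omega_i)}\le\|\nabla\phi_n\|_{L^p(\Omega)}$ and $\sum_i\|\nabla(\phi_n\cdot 1_{\Omega_i})\|_{L^p(\Omega_i)}^p=\|\nabla\phi_n\|_{L^p(\Omega)}^p$; passing to the limit, and using $\phi_n\to U$ in $L^q$ to identify the limits, gives $u_i=U\cdot 1_{\Omega_i}\in\mathcal{D}^{1,p}_0(\Omega_i)$, the additivity of the Dirichlet energy, and convergence of $\sum_i u_i$ to $U$. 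It is also worth recording where the compactness hypothesis is used in the ``if'' direction: it forces $\lambda^1_{p,q}(\Omega_i)\to\infty$, but by itself this does not make $\sum_i(\delta_i/\lambda_i)^\frac{q}{p-q}$ converge when infinitely many $\delta_i=1$, so the requirement that \eqref{rap_gen} define a number really is part of the ``if'' hypotheses, whereas it is automatic in the ``only if'' direction.
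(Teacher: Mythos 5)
Your argument is correct, and it is in fact more complete than what the paper offers: the paper gives no proof at all, saying only that one can ``iterate'' Proposition \ref{prop:generalone}. Your finite-$\#$ induction is exactly that iteration (the exponent bookkeeping $(\delta'/\mu)^{q/(p-q)}=\delta'\sum_{i\ge 2}(\delta_i/\lambda_i)^{q/(p-q)}$ and $|\alpha'\tilde\alpha_i|=|\alpha_i|$ is the whole content there), while your treatment of $\#=\infty$ supplies the ingredient the paper leaves implicit even though it is genuinely needed (the Remark following the Corollary and Theorem \ref{teo:grovierone} both use the infinite case). Your decomposition lemma is the right key step, and your proof of it is sound: a compact subset of $\Omega$ can only meet finitely many of the pairwise disjoint $\Omega_i$, the positive mutual distances guarantee $\phi_n\cdot 1_{\Omega_i}\in C^\infty_0(\Omega_i)$, and the energy identity passes to the limit; the rest of both implications (the weak formulation tested with $\phi\cdot 1_{\Omega_i}$, the Cauchy property of the partial sums via the tail of $\sum\delta_i\lambda_i^{-q/(p-q)}$, and the normalizations $\|U\|_{L^q}^q=1$, $\int|\nabla U|^p=\lambda$) checks out. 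One small correction to your closing comment: the convergence of $\sum_i(\delta_i/\lambda_i)^{q/(p-q)}$ is in fact automatic under the compactness hypothesis, and not an extra assumption. This is the point of the paper's Remark: each partial-sum expression $\bigl[\sum_{i=1}^k(\delta_i/\lambda_i)^{q/(p-q)}\bigr]^{(q-p)/q}$ is itself a $(p,q)$-eigenvalue of $\Omega$ (zero-extension of the corresponding eigenfunction of $\Omega_1\cup\dots\cup\Omega_k$), hence bounded below by $\lambda^1_{p,q}(\Omega)>0$, which bounds the partial sums above by $\lambda^1_{p,q}(\Omega)^{-q/(p-q)}$ and forces convergence. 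Treating convergence as part of the hypothesis is harmless for the ``if'' statement as phrased, but you do not need your weaker observation that $\lambda^1_{p,q}(\Omega_i)\to\infty$ is insufficient; the stronger automatic bound is available. Also, your phrase ``the general eigenfunction for this $\lambda$ is $CU$'' should be read, as in the paper, as the representation of eigenfunctions associated with the given choice of $(\delta_i,\lambda_i)$: the same $\lambda$ may in principle arise from different choices, and then all such representations occur.
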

\begin{oss}
When $\#=\infty$, i.e. $\Omega$ has infinitely many connected components, formula \eqref{rap_gen} above has to be interpreted in the usual sense
\[
\left[\displaystyle\sum_{i=1}^\#\left(\frac{\delta_i}{\lambda_i}\right)^\frac{q}{p-q}\right]^\frac{q-p}{q}=\lim_{k\to\infty} \left[\displaystyle\sum_{i=1}^k\left(\frac{\delta_i}{\lambda_i}\right)^\frac{q}{p-q}\right]^\frac{q-p}{q},
\]
since the limit exists by monotonicity. We also observe that since $q-p<0$, if $\delta_k=1$ then we have
\[
\left[\displaystyle\sum_{i=1}^\#\left(\frac{\delta_i}{\lambda_i}\right)^\frac{q}{p-q}\right]^\frac{q-p}{q}\le \lambda_k<+\infty.
\]
On the other hand, since for every $k\in\mathbb{N}$, the formula 
\[
\left[\displaystyle\sum_{i=1}^k\left(\frac{\delta_i}{\lambda_i}\right)^\frac{q}{p-q}\right]^\frac{q-p}{q}
\]
gives a $(p,q)-$eigenvalue of $\Omega$, by recalling that $\lambda^1_{p,q}(\Omega)$ is the least eigenvalue we obtain
\[
\left[\displaystyle\sum_{i=1}^\#\left(\frac{\delta_i}{\lambda_i}\right)^\frac{q}{p-q}\right]^\frac{q-p}{q}=\lim_{k\to\infty} \left[\displaystyle\sum_{i=1}^k\left(\frac{\delta_i}{\lambda_i}\right)^\frac{q}{p-q}\right]^\frac{q-p}{q}\ge \lambda^1_{p,q}(\Omega)>0.
\]
\end{oss}

\subsection{The first eigenvalue}

Thanks to the formula of Proposition \ref{prop:generalone}, we can now compute the first $(p,q)-$eigenvalue of a disconnected set. For ease of readability, we start as before with the case of two connected components.
\begin{coro}
\label{prop:primo}
Let $1<q<p<\infty$ and let $\Omega\subset\mathbb{R}^N$ be an open set such that $\mathcal{D}^{1,p}_0(\Omega)\hookrightarrow L^q(\Omega)$ is compact. Let us suppose that
\[
\Omega=\Omega_1\cup \Omega_2,
\]
with $\Omega_i\subset \mathbb{R}^N$ open connected set, such that $\mathrm{dist}(\Omega_1,\Omega_2)>0$.
Then we have
\begin{equation}
\label{sub}
\lambda^1_{p,q}(\Omega)=\left[\left(\frac{1}{\lambda^1_{p,q}(\Omega_1)}\right)^\frac{q}{p-q}+\left(\frac{1}{\lambda^1_{p,q}(\Omega_2)}\right)^\frac{q}{p-q}\right]^\frac{q-p}{q}.
\end{equation}
Moreover, each first $(p,q)-$eigenfunction of $\Omega$ with unitary $L^q$ norm has the form
\begin{equation}
\label{autofunzioni1}
\alpha_1\,u_1+\alpha_2\,u_2,\qquad \mbox{ where }|\alpha_i|=\left(\frac{\lambda^1_{p,q}(\Omega)}{\lambda_{p,q}^1(\Omega_i)}\right)^\frac{1}{p-q},
\end{equation}
and $u_i\in\mathcal{D}^{1,p}_0(\Omega)$ is the first positive $(p,q)-$eigenfunction of $\Omega_i$ with unitary $L^q$ norm, for $i=1,2$.
\end{coro}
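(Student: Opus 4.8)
The plan is to derive both assertions from Proposition \ref{prop:generalone}. The starting point is the elementary remark that $\lambda^1_{p,q}(\Omega)$ is itself a $(p,q)$-eigenvalue: since the embedding $\mathcal{D}^{1,p}_0(\Omega)\hookrightarrow L^q(\Omega)$ is compact, the infimum defining $\lambda^1_{p,q}(\Omega)$ is attained by some $u\in\mathcal{S}_{p,q}(\Omega)$, and the Lagrange multiplier rule shows that such a $u$ solves \eqref{autosalone} with $\lambda=\lambda^1_{p,q}(\Omega)$. As the same holds for $\Omega_1$ and $\Omega_2$, and as the $(p,q)$-spectrum is bounded below by the first eigenvalue, we get $\lambda^1_{p,q}(\Omega)=\min\mathrm{Spec}(\Omega;p,q)$ and $\lambda^1_{p,q}(\Omega_i)=\min\mathrm{Spec}(\Omega_i;p,q)$.

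To prove \eqref{sub}, I would minimize the expression \eqref{rap} over all admissible parameters. By Proposition \ref{prop:generalone}, this minimum equals $\min\mathrm{Spec}(\Omega;p,q)=\lambda^1_{p,q}(\Omega)$. Two monotonicity facts pin it down. Since $q/(p-q)>0$ and $(q-p)/q<0$, the map
\[
(\mu_1,\mu_2)\longmapsto\left[\left(\frac{\delta_1}{\mu_1}\right)^{\frac{q}{p-q}}+\left(\frac{\delta_2}{\mu_2}\right)^{\frac{q}{p-q}}\right]^{\frac{q-p}{q}}
\]
is non-decreasing in each variable, and strictly increasing in $\mu_i$ when $\delta_i=1$; hence, over $\lambda_i\ge\lambda^1_{p,q}(\Omega_i)$, the minimum is attained at $\lambda_i=\lambda^1_{p,q}(\Omega_i)$. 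With these values fixed, switching from a single active index to $\delta_1=\delta_2=1$ strictly enlarges the bracket and therefore, because of the negative outer exponent, strictly lowers the value; so the minimum is attained only for $\delta_1=\delta_2=1$, and it equals the right-hand side of \eqref{sub}. That this number is genuinely a $(p,q)$-eigenvalue of $\Omega$ is the converse part of Proposition \ref{prop:generalone}, applied with $u_i$ a first $(p,q)$-eigenfunction of $\Omega_i$.

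For the eigenfunctions, let $U$ be any first $(p,q)$-eigenfunction of $\Omega$ with $\|U\|_{L^q(\Omega)}=1$. By Proposition \ref{prop:generalone}, $U=C\,(\delta_1\,\alpha_1\,u_1+\delta_2\,\alpha_2\,u_2)$ with $|\alpha_i|=(\lambda/\lambda_i)^{1/(p-q)}$, where $\lambda=\lambda^1_{p,q}(\Omega)$ has the form \eqref{rap} and $u_i$ is a $(p,q)$-eigenfunction of $\Omega_i$ with unitary $L^q$ norm, relative to some $\lambda_i$. The strict monotonicity just used forces $\delta_1=\delta_2=1$ (if some $\delta_i=0$, then \eqref{rap} would give $\lambda=\lambda_j\ge\lambda^1_{p,q}(\Omega_j)>\lambda^1_{p,q}(\Omega)$, a contradiction) and $\lambda_i=\lambda^1_{p,q}(\Omega_i)$ for $i=1,2$. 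Since each $\Omega_i$ is connected, $\lambda^1_{p,q}(\Omega_i)$ is simple, so $u_i$ equals, up to a sign, the first positive $(p,q)$-eigenfunction of $\Omega_i$ normalized in $L^q$. Finally, because $u_1,u_2$ have disjoint supports, $\|U\|_{L^q(\Omega)}^q=|C|^q\big(|\alpha_1|^q+|\alpha_2|^q\big)=|C|^q$ by \eqref{sub}, so $|C|=1$; absorbing the signs of $C$ and of the $u_i$ into the $\alpha_i$ gives exactly \eqref{autofunzioni1}.

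The routine parts are the applications of Proposition \ref{prop:generalone}; the main point requiring care is the bookkeeping of the two monotonicities of \eqref{rap} (both in the $\lambda_i$ and in the number of active components), together with the identification $\lambda^1_{p,q}(\Omega)=\min\mathrm{Spec}(\Omega;p,q)$. As an alternative that bypasses Proposition \ref{prop:generalone} entirely, one may work directly with the Rayleigh quotient: writing $u=u_1+u_2$ with $u_i=u\,1_{\Omega_i}$ and $t=\|u_1\|_{L^q(\Omega_1)}^q\in[0,1]$, one has $\int_\Omega|\nabla u|^p\ge\lambda^1_{p,q}(\Omega_1)\,t^{p/q}+\lambda^1_{p,q}(\Omega_2)\,(1-t)^{p/q}$, and a one-variable minimization over $t$ yields the right-hand side of \eqref{sub}; the unique minimizing $t$, combined with the simplicity of $\lambda^1_{p,q}(\Omega_i)$, gives \eqref{autofunzioni1}.
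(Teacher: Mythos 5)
Your argument is correct and matches the paper's proof in essence: both identify $\lambda^1_{p,q}(\Omega)$ with the minimum of the spectrum, use the representation \eqref{rap} from Proposition \ref{prop:generalone}, and exploit the monotonicity of $(s,t)\mapsto\bigl[s^{q/(p-q)}+t^{q/(p-q)}\bigr]^{(q-p)/q}$ to force $\delta_1=\delta_2=1$ and $\lambda_i=\lambda^1_{p,q}(\Omega_i)$, with the eigenfunction form then read off from \eqref{autofunzioni} (you additionally spell out the simplicity and normalization details that the paper leaves implicit). The direct Rayleigh-quotient minimization you sketch at the end is a valid alternative but is not the route taken in the paper.
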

\begin{proof}
From formula \eqref{rap}, we already know that we must have
\begin{equation}
\label{minimizza}
\lambda^1_{p,q}(\Omega)=\left[\displaystyle\left(\frac{\delta_1}{\lambda_1}\right)^\frac{q}{p-q}+\displaystyle\left(\frac{\delta_2}{\lambda_2}\right)^\frac{q}{p-q}\right]^\frac{q-p}{q}\qquad \mbox{ for some eigenvalue $\lambda_i$ of $\Omega_i$}.
\end{equation}
We now observe that the function
\[
\Phi(s,t)=\left[s^\frac{q}{p-q}+t^\frac{q}{p-q}\right]^\frac{q-p}{q},\qquad (s,t)\in \Big([0,+\infty)\times [0,+\infty)\Big)\setminus\{(0,0)\},
\]
is decreasing in both variables (here we use that $q<p$). This implies that the right-hand side of \eqref{minimizza} is minimal when
\[
\delta_1=\delta_2=1,\qquad \lambda_1=\lambda^1_{p,q}(\Omega_1)\qquad \mbox{ and }\qquad \lambda_2=\lambda^1_{p,q}(\Omega_2),
\]
i.e. formula \eqref{sub}. The representation formula \eqref{autofunzioni1} now follows from that of Proposition \ref{prop:generalone}.
\end{proof}
\begin{oss}
Under the assumptions of the previous result, we obtain in particular that $\Omega=\Omega_1\cup \Omega_2$ has exactly $4$ first $(p,q)-$eigenfunctions with unitary $L^q$ norm, given by
\[
|\alpha_1|\,u_1+|\alpha_2|\,u_2,\qquad |\alpha_1|\,u_1-|\alpha_2|\,u_2,\qquad -|\alpha_1|\,u_1+|\alpha_2|\,u_2\qquad \mbox{ and }\qquad -|\alpha_1|\,u_1-|\alpha_2|\,u_2.
\]
In particular, although $\lambda^1_{p,q}(\Omega)$ is not simple in this situation, however the collection of the first eigenfunctions on $\mathcal{S}_{p,q}(\Omega)$ is a set of genus $1$. 
\par
This phenomenon disappears in the limit case $p=q$, if the two components $\Omega_1$ and $\Omega_2$ have the same first eigenvalue: indeed, in this case the first eigenfunctions on $\mathcal{S}_{p}(\Omega)$ forms the set of genus $2$
\[
\{\alpha\,u_1+\beta\,u_2\, :\, |\alpha|^p+|\beta|^p=1\}.
\]
\end{oss}
More generally, we get the following
\begin{coro}
\label{coro:first}
Let $1<q<p<\infty$ and let $\Omega\subset\mathbb{R}^N$ be an open set such that $\mathcal{D}^{1,p}_0(\Omega)\hookrightarrow L^q(\Omega)$ is compact.
Let us suppose that
\[
\Omega=\bigcup_{i=1}^\# \Omega_i,
\]
with $\Omega_i\subset \mathbb{R}^N$ open set, such that $\mathrm{dist}(\Omega_i,\Omega_j)>0$, for $i\not =j$. Then we have
\[
\lambda^1_{p,q}(\Omega)=\left[\displaystyle\sum_{i=1}^\#\left(\frac{1}{\lambda^1_{p,q}(\Omega_i)}\right)^\frac{q}{p-q}\right]^\frac{q-p}{q}.
\]
Moreover, each corresponding first $(p,q)-$eigenfunction of $\Omega$ with unitary $L^q$ norm has the form
\[
\sum_{i=1}^\#\alpha_i\,u_i,\qquad \mbox{ where }|\alpha_i|=\displaystyle\left(\frac{\lambda^1_{p,q}(\Omega)}{\lambda_{p,q}^1(\Omega_i)}\right)^\frac{1}{p-q},
\]
and $u_i\in\mathcal{D}^{1,p}_0(\Omega)$ is a first $(p,q)-$eigenfunction of $\Omega_i$ with unitary $L^q$ norm corresponding to $\lambda_i$.
\end{coro}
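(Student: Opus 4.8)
The plan is to deduce everything from the iterated representation in Corollary~\ref{coro:crucial} together with a monotonicity argument, in the same spirit as the proof of Corollary~\ref{prop:primo}. By Corollary~\ref{coro:crucial}, $\mathrm{Spec}(\Omega;p,q)$ consists exactly of the numbers of the form \eqref{rap_gen}, so $\lambda^1_{p,q}(\Omega)$, being the least element of $\mathrm{Spec}(\Omega;p,q)$, is obtained by minimizing the right-hand side of \eqref{rap_gen} over all admissible data $\{(\delta_i,\lambda_i)\}_i$, with $\delta_i\in\{0,1\}$ not all zero and $\lambda_i$ a $(p,q)-$eigenvalue of $\Omega_i$ (note that $\mathcal{D}^{1,p}_0(\Omega_i)\hookrightarrow L^q(\Omega_i)$ is compact, being a restriction of the corresponding embedding for $\Omega$, so each $\lambda^1_{p,q}(\Omega_i)$ is well defined and positive). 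First I would record the monotonicity: the map
\[
\Phi(\{s_i\}_i)=\left[\sum_{i=1}^\# s_i^{\frac{q}{p-q}}\right]^{\frac{q-p}{q}}
\]
is strictly decreasing in each variable $s_i\ge 0$ (here the hypothesis $q<p$ is used twice, since $q/(p-q)>0$ while $(q-p)/q<0$). Hence for each $i$ the term $(\delta_i/\lambda_i)^{q/(p-q)}$ is as large as possible exactly when $\delta_i=1$ and $\lambda_i=\lambda^1_{p,q}(\Omega_i)$, so the right-hand side of \eqref{rap_gen} is minimal precisely for this choice. When $\#<\infty$ this immediately yields the stated formula for $\lambda^1_{p,q}(\Omega)$, and the eigenfunction representation then follows by plugging this choice into the formula of Corollary~\ref{coro:crucial}.

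The point requiring a little care is the case $\#=\infty$. Set
\[
\Lambda:=\left[\sum_{i=1}^\infty\left(\frac{1}{\lambda^1_{p,q}(\Omega_i)}\right)^{\frac{q}{p-q}}\right]^{\frac{q-p}{q}}=\lim_{k\to\infty}\left[\sum_{i=1}^k\left(\frac{1}{\lambda^1_{p,q}(\Omega_i)}\right)^{\frac{q}{p-q}}\right]^{\frac{q-p}{q}},
\]
the limit existing by monotonicity as in the Remark after Corollary~\ref{coro:crucial}. Each partial sum on the right is a genuine $(p,q)-$eigenvalue of $\Omega$ (it corresponds to the finite datum $\delta_1=\dots=\delta_k=1$, $\delta_i=0$ for $i>k$, $\lambda_i=\lambda^1_{p,q}(\Omega_i)$), hence is $\ge\lambda^1_{p,q}(\Omega)>0$; passing to the limit, $\Lambda\ge\lambda^1_{p,q}(\Omega)>0$, so in particular the series defining $\Lambda$ converges and, since $\mathrm{Spec}(\Omega;p,q)$ is closed, $\Lambda\in\mathrm{Spec}(\Omega;p,q)$. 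Conversely, writing $\lambda^1_{p,q}(\Omega)$ in the form \eqref{rap_gen} and using term by term that $(\delta_i/\lambda_i)^{q/(p-q)}\le(1/\lambda^1_{p,q}(\Omega_i))^{q/(p-q)}$ together with the fact that $x\mapsto x^{(q-p)/q}$ is decreasing, we get $\lambda^1_{p,q}(\Omega)\ge\Lambda$. Therefore $\lambda^1_{p,q}(\Omega)=\Lambda$, which is the asserted formula.

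It remains to identify the first eigenfunctions. Since $\Phi$ is \emph{strictly} decreasing in each variable, the minimal value $\lambda^1_{p,q}(\Omega)$ of the right-hand side of \eqref{rap_gen} is attained only for the configuration $\delta_i=1$, $\lambda_i=\lambda^1_{p,q}(\Omega_i)$ for all $i$; substituting this into the eigenfunction representation of Corollary~\ref{coro:crucial} gives exactly
\[
\sum_{i=1}^\#\alpha_i\,u_i,\qquad |\alpha_i|=\left(\frac{\lambda^1_{p,q}(\Omega)}{\lambda^1_{p,q}(\Omega_i)}\right)^{\frac{1}{p-q}},
\]
with $u_i\in\mathcal{D}^{1,p}_0(\Omega)$ a first $(p,q)-$eigenfunction of $\Omega_i$ normalized in $L^q$. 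The only real (and quite mild) obstacle is the bookkeeping in the infinite case above, namely checking that the infimal value is actually realized inside $\mathrm{Spec}(\Omega;p,q)$; this is taken care of by the closedness of the spectrum and the observations in the Remark following Corollary~\ref{coro:crucial}.
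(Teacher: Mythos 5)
Your proof is correct and follows exactly the route the paper intends: the paper states this corollary without a separate proof, as the direct generalization of Corollary \ref{prop:primo} obtained by combining the representation formula of Corollary \ref{coro:crucial} with the monotonicity of $\Phi$. Your additional care in the case $\#=\infty$ (partial sums are eigenvalues bounded below by $\lambda^1_{p,q}(\Omega)$, hence the series converges, and closedness of the spectrum gives $\Lambda\in\mathrm{Spec}(\Omega;p,q)$) matches the observations in the Remark following Corollary \ref{coro:crucial} and fills in precisely what the paper leaves implicit.
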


\section{Construction of the examples}
We are now ready for the main results of this note. 
\begin{teo}
\label{teo:palle}
Let $1<q<p<\infty$ and $0<r\le R$, we take the disjoint union of balls
\[
\mathcal{B}=B_R(x_0)\cup B_r(y_0),\qquad \mbox{ with } |x_0-y_0|>R+r.
\] 
Then
\begin{equation}
\label{diversi!}
\mathrm{Spec}_{LS}(\mathcal{B};p,q)\not =\mathrm{Spec}(\mathcal{B};p,q).
\end{equation}
Moreover, the set $\mathrm{Spec}(\mathcal{B};p,q)$ has (at least) countably many accumulation points.
\end{teo}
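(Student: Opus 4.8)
The plan is to identify the whole spectrum of $\mathcal{B}$ through Proposition \ref{prop:generalone}, to exhibit countably many finite accumulation points inside it, and then to observe that $\mathrm{Spec}_{LS}(\mathcal{B};p,q)$ cannot have any finite accumulation point at all --- which makes \eqref{diversi!} an automatic consequence of the accumulation statement.

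First I would note that $|x_0-y_0|>R+r$ yields $\mathrm{dist}(B_R(x_0),B_r(y_0))>0$ and that the two balls are bounded, so Proposition \ref{prop:generalone} applies with $\Omega_1=B_R(x_0)$, $\Omega_2=B_r(y_0)$. Setting $a=q/(p-q)>0$ and $\Phi(\mu,\nu)=(\mu^{-a}+\nu^{-a})^{-1/a}$, formula \eqref{rap} taken with $\delta_1=\delta_2=1$ tells us that $\Phi(\mu,\nu)\in\mathrm{Spec}(\mathcal{B};p,q)$ for every $(p,q)$-eigenvalue $\mu$ of $B_R(x_0)$ and every $(p,q)$-eigenvalue $\nu$ of $B_r(y_0)$. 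An elementary computation shows that, for fixed $\mu>0$, the map $\nu\mapsto\Phi(\mu,\nu)$ is strictly increasing on $(0,+\infty)$, that $\Phi(\mu,\nu)<\mu$ there, and that $\Phi(\mu,\nu)\to\mu$ as $\nu\to+\infty$.

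Next, fixing $m\ge1$ I would put $\mu=\lambda^m_{p,q}(B_R(x_0))$, which is a $(p,q)$-eigenvalue of $B_R(x_0)$ by \cite[Theorem 5.2]{FL}, and use that $B_r(y_0)$ is bounded so that, again by \cite[Theorem 5.2]{FL}, the numbers $\{\lambda^k_{p,q}(B_r(y_0))\}_k$ are $(p,q)$-eigenvalues of $B_r(y_0)$ forming a nondecreasing sequence diverging to $+\infty$, hence attaining infinitely many distinct finite values. Then $\{\Phi(\lambda^m_{p,q}(B_R(x_0)),\lambda^k_{p,q}(B_r(y_0)))\}_k\subset\mathrm{Spec}(\mathcal{B};p,q)$ is a family of eigenvalues lying strictly below $\mu$, and by strict monotonicity of $\Phi(\mu,\cdot)$ it contains a strictly increasing subsequence converging to $\mu$. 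Hence $\lambda^m_{p,q}(B_R(x_0))$ is a finite accumulation point of $\mathrm{Spec}(\mathcal{B};p,q)$; since $\lambda^m_{p,q}(B_R(x_0))\to+\infty$ as $m\to\infty$, the values $\{\lambda^m_{p,q}(B_R(x_0))\}_{m\ge1}$ form a countably infinite set of accumulation points.

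Finally, to prove \eqref{diversi!} I would simply recall that $\mathrm{Spec}_{LS}(\mathcal{B};p,q)=\{\lambda^k_{p,q}(\mathcal{B})\}_k$ with $\lambda^k_{p,q}(\mathcal{B})$ nondecreasing in $k$ and diverging to $+\infty$ by \cite[Theorem 5.2]{FL}: thus $\mathrm{Spec}_{LS}(\mathcal{B};p,q)\cap[0,M]$ is finite for every $M>0$, so $\mathrm{Spec}_{LS}(\mathcal{B};p,q)$ has no finite accumulation point, and therefore it cannot equal $\mathrm{Spec}(\mathcal{B};p,q)$. I expect the one point needing a touch of care to be the verification that the numbers $\Phi(\lambda^m_{p,q}(B_R(x_0)),\lambda^k_{p,q}(B_r(y_0)))$ genuinely accumulate at $\mu$ (i.e.\ that they do not stabilize and stay strictly below $\mu$), but this follows precisely from the strict monotonicity of $\Phi(\mu,\cdot)$ and the divergence of $\{\lambda^k_{p,q}(B_r(y_0))\}_k$; everything else is a direct application of Proposition \ref{prop:generalone}.
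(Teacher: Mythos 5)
Your proposal is correct and follows essentially the same route as the paper: both use formula \eqref{rap} with $\delta_1=\delta_2=1$, combining a fixed Lusternik--Schnirelmann eigenvalue of the large ball with the diverging LS sequence of the small ball to produce eigenvalues of $\mathcal{B}$ accumulating at each $\lambda^m_{p,q}(B_R(x_0))$, and then conclude \eqref{diversi!} because $\mathrm{Spec}_{LS}(\mathcal{B};p,q)$, being a nondecreasing sequence diverging to $+\infty$, can have no finite accumulation point. Your explicit remark that $\Phi(\mu,\nu)<\mu$ with $\Phi(\mu,\nu)\to\mu$ guarantees genuine accumulation is a slightly more careful phrasing of what the paper leaves implicit, but the argument is the same.
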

\begin{proof}
We observe that for every $k\ge 1$ there exists a sequence $\{\lambda_{n,k}\}_{n\in\mathbb{N}}\subset\mathrm{Spec}(\mathcal{B};p,q)$ such that 
\begin{equation}
\label{saltelli}
\lambda^k_{p,q}(B_R(x_0))=\lim_{n\to\infty} \lambda_{n,k}.
\end{equation}
Namely,
\[
\lambda_{n,k}=\left[\displaystyle\left(\frac{1}{\lambda^k_{p,q}(B_R(x_0))}\right)^\frac{q}{p-q}+\displaystyle\left(\frac{1}{\lambda^n_{p,q}(B_r(y_0))}\right)^\frac{q}{p-q}\right]^\frac{q-p}{q},
\]
is a $(p,q)-$eigenvalue of $\mathcal{B}$ for all $n\ge1$, thanks to formula \eqref{rap}, and we have that
\[
\lim_{n\to\infty}\lambda^n_{p,q}(B_r(y_0))=+\infty.
\]
From \eqref{saltelli} we immediately deduce the second part of the statement, since $\lambda^k_{p,q}(B_R(x_0))$
belongs to $\mathrm{Spec}(\mathcal{B};p,q)$ by formula \eqref{rap}. Moreover, \eqref{saltelli} implies \eqref{diversi!} as well.
Indeed, if the two spectra were the same then
\[
\mathrm{Spec}(\mathcal{B};p,q)=\{\lambda^k_{p,q}(\mathcal{B})\}_{k\in\mathbb{N}\setminus\{0\}}
\]
would be an increasing sequence diverging to $+\infty$, with (infinitely many) accumulation points,
which is impossibile.
\end{proof}
We can refine the previous construction and obtain that for our eigenvalue problem even the isolation of the first eigenvalue may fail, in general.
\begin{teo}
\label{teo:grovierone}
Let $1<q<p$ and let $\{r_i\}_{i\in\mathbb{N}}\subset \mathbb{R}$ be a sequence of strictly positive numbers, such that
\begin{equation}
\label{compatto}
\sum_{i=0}^\infty r_i^{\frac{p\,q}{p-q}+N}<+\infty.
\end{equation}
We then define the sequence of points $\{x_i\}_{i\in\mathbb{N}}\subset\mathbb{R}^N$ by
\[
\left\{\begin{array}{rcl}
x_0&=&(0,\dots,0),\\
x_{i+1}&=&(2^{-i}+r_i+r_{i+1},0,\dots,0)+x_i,
\end{array}
\right.
\]
and the disjoint union of balls 
\[
\mathcal{T}=\bigcup_{i=0}^\infty B_{r_i}(x_i).
\]
Then 
\[
\mathrm{Spec}_{LS}(\mathcal{T};p,q)\not =\mathrm{Spec}(\mathcal{T};p,q).
\]
and the set $\mathrm{Spec}(\mathcal{T};p,q)$ has (at least) countably many accumulation points. Moreover, the first eigenvalue $\lambda^1_{p,q}(\mathcal{T})$ is not isolated. 
\end{teo}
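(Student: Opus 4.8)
The plan is to derive every assertion from Corollary~\ref{coro:crucial} and Corollary~\ref{coro:first} applied to the decomposition $\mathcal{T}=\bigcup_{i\ge 0}B_{r_i}(x_i)$; once their standing hypothesis is verified, the three conclusions follow by elementary manipulations of the series in \eqref{rap_gen}, exactly in the spirit of the proof of Theorem~\ref{teo:palle}. To begin with I would record that the balls are genuinely well separated: the intervals $[(x_i)_1-r_i,(x_i)_1+r_i]$ are pairwise disjoint and increasing in $i$, and two consecutive balls lie at distance exactly $2^{-i}>0$, so $\mathrm{dist}(B_{r_i}(x_i),B_{r_j}(x_j))>0$ for $i\ne j$. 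I would then invoke the scaling identity $\lambda^1_{p,q}(B_\rho)=\rho^{-p-N(p-q)/q}\,\lambda^1_{p,q}(B_1)$, which turns hypothesis \eqref{compatto} into
\[
\sum_{i\ge 0}\big(\lambda^1_{p,q}(B_{r_i}(x_i))\big)^{-\frac{q}{p-q}}=\lambda^1_{p,q}(B_1)^{-\frac{q}{p-q}}\sum_{i\ge 0}r_i^{\frac{pq}{p-q}+N}<+\infty.
\]

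The step I expect to be the main obstacle is the compactness of $\mathcal{D}^{1,p}_0(\mathcal{T})\hookrightarrow L^q(\mathcal{T})$, which is what makes Proposition~\ref{prop:generalone} and its corollaries applicable and which cannot be read off from boundedness, since $\mathcal{T}$ need not be bounded. Testing the first-eigenvalue quotient of the tail $\bigcup_{i\ge k}B_{r_i}(x_i)$ with a $C^\infty_0$ function (which splits into finitely many pieces with disjoint supports), using $\int_{B_\rho}|\nabla v|^p\ge\lambda^1_{p,q}(B_\rho)\,\|v\|_{L^q(B_\rho)}^p$ on each ball and H\"older's inequality with exponents $p/q$ and $p/(p-q)$, one gets for every $k$
\[
\lambda^1_{p,q}\Big(\bigcup_{i\ge k}B_{r_i}(x_i)\Big)\ \ge\ \bigg(\sum_{i\ge k}\big(\lambda^1_{p,q}(B_{r_i}(x_i))\big)^{-\frac{q}{p-q}}\bigg)^{-\frac{p-q}{q}}\longrightarrow+\infty\quad\text{as }k\to\infty,
\]
the divergence being the tail of the convergent series above. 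Given a bounded sequence $\{u_m\}\subset\mathcal{D}^{1,p}_0(\mathcal{T})$, its restriction to the finite union $\bigcup_{i<k}B_{r_i}(x_i)$ admits an $L^q$-convergent subsequence by Rellich--Kondrachov (that set is bounded and $q<p$), while its $L^q$-mass on the tail satisfies $\|u_m\|_{L^q(\bigcup_{i\ge k}B_{r_i})}^p\le\big(\lambda^1_{p,q}(\bigcup_{i\ge k}B_{r_i})\big)^{-1}\int_{\mathcal{T}}|\nabla u_m|^p$, hence is small uniformly in $m$ once $k$ is large; a diagonal argument then produces an $L^q(\mathcal{T})$-convergent subsequence. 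From here on Corollary~\ref{coro:crucial} and Corollary~\ref{coro:first} are available with $\#=\infty$.

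To see that $\lambda^1_{p,q}(\mathcal{T})$ is not isolated, I would consider for each $n\ge 1$ the value $\nu_n$ given by \eqref{rap_gen} with $\lambda_i=\lambda^1_{p,q}(B_{r_i}(x_i))$ for all $i$, $\delta_i=1$ for $i\ne n$ and $\delta_n=0$; by Corollary~\ref{coro:crucial} each $\nu_n$ lies in $\mathrm{Spec}(\mathcal{T};p,q)$. Deleting the $n$-th (positive) term of the series and using that $s\mapsto s^{(q-p)/q}$ is strictly decreasing shows $\nu_n>\lambda^1_{p,q}(\mathcal{T})$, while the deleted term equals $\lambda^1_{p,q}(B_1)^{-q/(p-q)}\,r_n^{\frac{pq}{p-q}+N}\to 0$, so $\nu_n\to\lambda^1_{p,q}(\mathcal{T})$; hence $\lambda^1_{p,q}(\mathcal{T})$ is an accumulation point of the spectrum, in particular not isolated.

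Finally, for the countably many accumulation points and the strict inequality of spectra I would copy the argument of Theorem~\ref{teo:palle}. Fixing the ball $B_{r_0}(x_0)$, for every $k\ge 1$ the numbers
\[
\lambda_{n,k}=\bigg[\big(\lambda^k_{p,q}(B_{r_0}(x_0))\big)^{-\frac{q}{p-q}}+\big(\lambda^n_{p,q}(B_{r_1}(x_1))\big)^{-\frac{q}{p-q}}\bigg]^{-\frac{p-q}{q}}
\]
belong to $\mathrm{Spec}(\mathcal{T};p,q)$ by \eqref{rap_gen} (take $\delta_0=\delta_1=1$, $\delta_i=0$ for $i\ge 2$), and since $\lambda^n_{p,q}(B_{r_1}(x_1))\to+\infty$ they converge to $\lambda^k_{p,q}(B_{r_0}(x_0))$ as $n\to\infty$; as $\lambda^k_{p,q}(B_{r_0}(x_0))$ is itself in $\mathrm{Spec}(\mathcal{T};p,q)$ and is non-decreasing and unbounded in $k$, this yields countably many distinct accumulation points. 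On the other hand $\mathrm{Spec}_{LS}(\mathcal{T};p,q)=\{\lambda^k_{p,q}(\mathcal{T})\}_{k\ge 1}$ is a non-decreasing sequence diverging to $+\infty$, hence, as a subset of $\mathbb{R}$, a closed set without accumulation points; it therefore cannot equal $\mathrm{Spec}(\mathcal{T};p,q)$, which is the asserted inequality.
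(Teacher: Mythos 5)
Your proposal is correct, and for the spectral conclusions it follows the paper's own route: both the ``countably many accumulation points'' and $\mathrm{Spec}_{LS}\neq\mathrm{Spec}$ parts are the argument of Theorem \ref{teo:palle} transplanted to two of the balls, and the non-isolation of $\lambda^1_{p,q}(\mathcal{T})$ is read off from Corollaries \ref{coro:crucial} and \ref{coro:first}; the only cosmetic difference there is that you approximate $\lambda^1_{p,q}(\mathcal{T})$ by deleting one term from the infinite series in \eqref{rap_gen}, while the paper uses the finite partial sums $\lambda_n=\bigl[\sum_{i\le n}\lambda^1_{p,q}(B_{r_i}(x_i))^{-\frac{q}{p-q}}\bigr]^{\frac{q-p}{q}}$ — both sequences converge to $\lambda^1_{p,q}(\mathcal{T})$ from above, and both are legitimate eigenvalues by Corollary \ref{coro:crucial}. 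The genuine divergence is in the compactness step: the paper simply cites \cite[Theorem 1.2 \& Example 5.2]{braruf} to deduce from \eqref{compatto} that $\mathcal{D}^{1,p}_0(\mathcal{T})\hookrightarrow L^q(\mathcal{T})$ is compact, whereas you prove it directly, using the scaling law $\lambda^1_{p,q}(B_\rho)=\rho^{-p-N\frac{p-q}{q}}\lambda^1_{p,q}(B_1)$ to convert \eqref{compatto} into convergence of $\sum_i\lambda^1_{p,q}(B_{r_i})^{-\frac{q}{p-q}}$, a H\"older estimate across components to bound the first eigenvalue of the tails from below by the reciprocal tail sums, and then Rellich plus a diagonal argument. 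This buys a self-contained proof (and makes transparent why the exponent $\frac{pq}{p-q}+N$ is the right one), at the price of the routine density/restriction point that $u\,1_{\bigcup_{i\ge k}B_{r_i}}$ lies in $\mathcal{D}^{1,p}_0$ of the tail so that your Poincar\'e-type bound applies to it — a point you leave implicit, but which is at the same level of rigor as the paper's own use of $U\cdot 1_{\Omega_i}\in\mathcal{D}^{1,p}_0(\Omega_i)$ in Proposition \ref{prop:generalone}. Your explicit verification that the balls are pairwise at positive distance (gap $2^{-i}$ between consecutive ones) is also a detail the paper leaves unsaid.
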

\begin{proof}
We first observe that the condition \eqref{compatto} guarantess the compactness of $\mathcal{D}^{1,p}_0(\mathcal{T})\hookrightarrow L^q(\mathcal{T})$, see \cite[Theorem 1.2 \& Example 5.2]{braruf}. The first statement follows as in the previous theorem.
\par
In order to prove that $\lambda^1_{p,q}(\mathcal{T})$ is an accumulation point of the spectrum, we can now use Corollaries \ref{coro:first} and \ref{coro:crucial} to construct a sequence of eigenvalues $\{\lambda_n\}_{n\in\mathbb{N}}$ such that $\lambda_n$ converges to $\lambda^1_{p,q}(\mathcal{T})$. We just set
\[
\lambda_n=\left[\displaystyle\sum_{i=1}^n\left(\frac{1}{\lambda^1_{p,q}(B_{r_i}(x_i))}\right)^\frac{q}{p-q}\right]^\frac{q-p}{q}.
\]
This gives the desired sequence. 
\end{proof}
\begin{figure}[h]
\includegraphics[scale=.25]{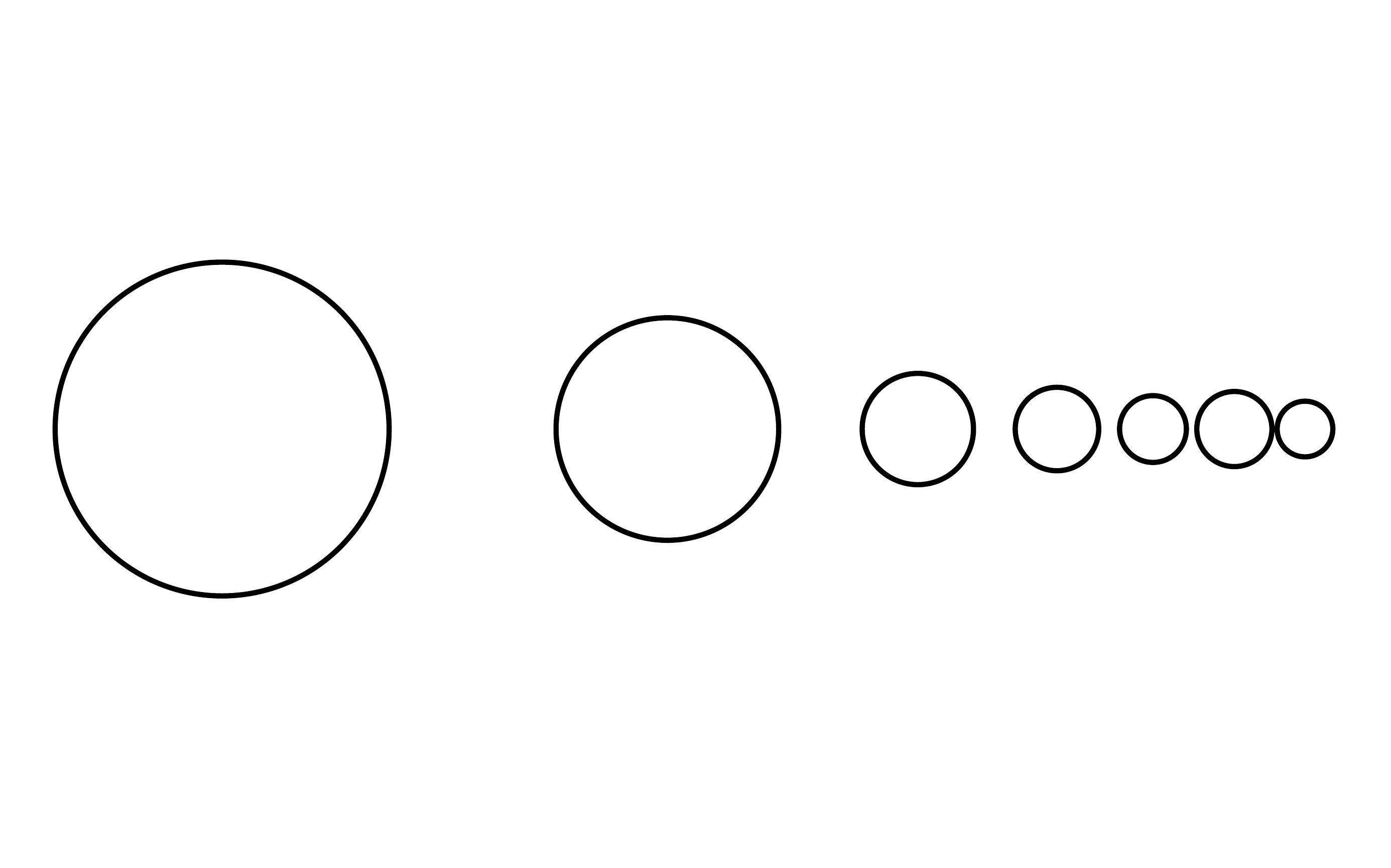}
\caption{The set $\mathcal{T}$ is a disjoint union of countably many shrinking balls. }
\end{figure}
\begin{oss}
The examples above are given in terms of disjoint unions of balls just for simplicity. Actually, they still work with disjoint unions of more general bounded sets.
\end{oss}

\end{document}